    \newtheorem{theorem}{Theorem}
  {\theoremstyle{definition}
  \newtheorem{definition}[theorem]{Definition}
}
  \newtheorem{proposition}[theorem]{Proposition}
  \newtheorem{corollary}[theorem]{Corollary}
\theoremstyle{plain}} 
\definecolor{darkred}{RGB}{139,0,0}
\definecolor{darkgreen}{RGB}{0,100,0}
\definecolor{darkmagenta}{RGB}{139,0,139}
\definecolor{darkorange}{RGB}{220,110,20}
\definecolor{gray}{rgb}{0.66, 0.66, 0.66}
  \title*{Doubling the rate -- improved error bounds for orthogonal projection with application to interpolation}
  \titlerunning{Doubling the rate -- improved error bounds for orthogonal projection}
  \author{Ian H.~Sloan and Vesa Kaarnioja}
 \institute{
  Ian H.~Sloan 
    \at School of Mathematics and Statistics, UNSW Sydney, Sydney NSW 2052, Australia \\
    \email{
    i.sloan@unsw.edu.au}\\ 
     Vesa  Kaarnioja \at Department of Mathematics and Computer Science, Free University of Berlin, 14195 Berlin, Germany\\
 \email{vesa.kaarnioja@fu-berlin.de}
} 
 \title{Doubling the rate -- improved error bounds for orthogonal projection with application to interpolation}
 \author{Ian H. Sloan\footnote{School of Mathematics and Statistics, UNSW Sydney, Sydney NSW 2052,
 Australia. Email: {\tt i.sloan@unsw.edu.au}}
  \and Vesa Kaarnioja\footnote{Department of Mathematics and Computer Science, Free
  University of Berlin, Arnimallee 6, 14195 Berlin, Germany. Email: {\tt vesa.kaarnioja@fu-berlin.de}}
 }
 \date{\today}
\begin{document}

\maketitle

\begin{abstract}Convergence rates for $L_2$ approximation in a Hilbert space $H$  are a central theme in numerical analysis. The present work is inspired by Schaback (\emph{Math.~Comp.}, 1999), who showed, in the context of best pointwise approximation for radial basis function interpolation, that the convergence rate for sufficiently smooth functions can be doubled, compared to the best rate for functions in the ``native space'' $H$. Motivated by this, we obtain a general result for $H$-orthogonal projection onto a finite dimensional subspace of $H$: namely, that any known $L_2$ convergence rate for all functions in $H$ translates into a doubled $L_2$ convergence rate for functions in a smoother normed space $B$, along with a similarly improved error bound in the $H$-norm, provided that $L_2$, $H$ and $B$ are suitably related.  As a special case we improve the known $L_2$ and $H$-norm  convergence rates for kernel interpolation in reproducing kernel Hilbert spaces, with particular attention to a recent study (Kaarnioja, Kazashi, Kuo, Nobile, Sloan, \emph{Numer.~Math.}, 2022) of  periodic kernel-based interpolation at lattice points applied to  parametric partial differential equations.  A second application is to radial basis function interpolation for general  conditionally positive definite basis functions, where again the $L_2$ convergence rate is doubled, and the convergence rate in the native space norm is similarly improved, for all functions in a smoother normed space $B$.
\end{abstract}

\section{Introduction}\label{sec:1}

A common situation in computational mathematics and approximation theory is that one seeks to approximate a real-valued function $f$ defined on a domain $D$, and belonging to some Hilbert space $H$  continuously embedded in $L_2(D)$, by the  $H$-orthogonal projection $Pf$ of $f$ onto a finite-dimensional subspace $V\subset H$. Many error bounds holding for all functions $f \in H$ are known.  Our aim in this paper is to provide improved error bounds for functions $g$ lying in a ``smoother'' normed space $B$ continuously embedded in $H$: we prove, under an appropriate relation between the spaces $L_2$, $H$ and $B$ (see \eqref{eq:norms_relation} below),  a doubled rate of convergence of $\|g - Pg\|_{L_2}$ for $g\in B$, and an analogous improved rate of convergence for the error in the  $H$ norm,  $\|g - Pg\|_H$.

The finite-dimensional spaces we have in mind are varied: for example, $V$ might be a linear space of polynomials or splines; or radial basis functions of given shape and a fixed set of nodes on $\mathbb{R}^d$; or a space of wavelets or kernels.  The domain might be $\mathbb{R}^d$ or an  open subset of $\mathbb{R}^d$, or a manifold such as $S^{d-1}:= \{ \bsx \in \mathbb{R}^d : \|\bsx\|_{\ell_2} =1\}$.

For very many Hilbert spaces $H$ and subspaces $V$ error bounds  of the following form are known for $\|f - P f\|_{L_2}$:
\begin{equation}\label{eq:first_ass}
\|f -P f\|_{L_2}  \le c n^{-\kappa} \|f\|_H \; \mbox{ for all } f \in H,
\end{equation}
where $n := \dim(V)$, $\|\cdot \|_H$ denotes the norm in $H$, and $\kappa>0$ is a known constant.   Results of this kind lie at the heart of computational mathematics.  If $\kappa$ is the supremum of all such constants then the result is one of best approximation.

The main aim of this paper is to show that if $H$ and $V$ are such that \eqref{eq:first_ass} holds, and if $B$ is an appropriate subspace of $H$ (see \eqref{eq:norms_relation} below), then an $L_2$ error bound with a doubled convergence rate holds for all functions in $B$:  specifically, we will prove
\begin{equation*}
\|g-P g\|_{L_2} \le c^2 n^{-2\kappa} \|g\|_B \; \mbox{ for all } g \in B.
\end{equation*}
Moreover, under the same condition the error bound in the $H$ norm will be shown to have the same bound as in \eqref{eq:first_ass}, specifically 
\begin{equation*}
\|g-P g\|_H \le c n^{-\kappa} \|g\|_B \; \mbox{ for all } g \in B.
\end{equation*}

The relation between the linear spaces $L_2$, $H$ and $B$ needed in this work, in addition to the embedding of $H$ in $L_2$ and of $B$ in $H$,  is 
 \begin{equation}\label{eq:norms_relation}
 |\langle f,g\rangle_H| \le \|f\|_{L_2} \|g\|_{B} \; \mbox{ for all }   f \in H,\; g \in B.
\end{equation}

A relation of this kind, while not universal, often holds (especially after utilising the freedom that exists in the choice of $B$), for example it may hold if $\langle f,g\rangle_H$  has a Fourier series definition, or if the inner product in $H$ is the $L_2$ inner product of gradients.  Simple examples are given in the next section.

The first result with the doubled-rate flavor appears to have been obtained by Schaback in~\cite{Sch99}, in the special context of best pointwise approximation by radial basis function interpolation; see also~\cite{HW}.  The paper~\cite{Sch99} is the inspiration for the present doubled-rate result. An earlier progenitor is the Aubin--Nitsche lemma~\cite{JA67,JN68}. 

 A recent paper by Hangelbroek and Rieger~\cite{HR22}, again in the context of radial basis function interpolation, asked a different question, namely whether improved convergence rates can be seen in norms other than $L_2$.  That paper encouraged us to seek the above error bound in the $H$ norm.
 
 The results above are proved in a general Hilbert space context  in Section~\ref{sec:3}, see Theorem~\ref{thm:principal}.
 
A potentially important area of application of the doubling results in this paper is to kernel interpolation.  If $K(\bsx,\bsy)$ for $\bsx, \bsy \in D$ is a given kernel, a  kernel approximation of a function $f$ defined on $D$ is an expression of the form
\[
f_n = \sum_{k=1}^n a_k K(\bst_k, \cdot),
\]
where $\bst_1, \ldots, \bst_n$ is a prescribed set of points in $D$, and $a_1, \ldots, a_n \in \mathbb{R}$.  The kernel approximation is a kernel interpolant if $f_n$ is required to satisfy 
\[
f_n(\bst_j)  =  f(\bst_j), \quad j = 1, \ldots, n.
\]
While many kernels are used in practical applications (Gaussian kernels being perhaps the most popular), of special interest to us is the case in which $K(\cdot,\cdot)$ is the reproducing kernel of a reproducing kernel Hilbert space $H$, because in that case, as shown in Section~\ref{sec:4}, kernel interpolation is equivalent to orthogonal projection with respect to the $H$ norm onto the finite-dimensional subspace defined by
\[
V :=  V_\bst :=\mathrm{span} \{K(\bst_1,\cdot),\ldots,K(\bst_n,\cdot)\}.
\]
A theorem for the doubling phenomenon in a general reproducing kernel Hilbert space $H$ is stated and proved in Section~\ref{sec:4}, see Theorem~\ref{thm:ker_interp}.   In Subsection~\ref{sec:41} we apply the results to recent high-dimensional computations based on kernel interpolation in a reproducing kernel Hilbert space $H$~\cite{KKKNS22,KKS23}, in an application to parametric partial differential equations. In this application the target function is known to be smooth, and hence lies naturally in a smoother normed space $B$.  It was in fact the observation of faster-than-expected convergence in this application that gave birth to the present project.

In Section~\ref{sec:5} we obtain equivalent results for general radial basis function interpolation.  While some radial basis functions are positive definite, and hence in principle covered already by Section~\ref{sec:4}, we here consider general conditionally positive definite basis functions.   In this case $H$ is the so-called ``native space'',  and $B$ is a space of smoother functions introduced by Schaback~\cite{Sch99}.  Improved convergence results for radial basis function interpolation in the $L_2$ and native space norms are given in Theorem~\ref{thm:rbf}.  The proof again rests on the fact that radial basis function interpolation is expressible as orthogonal projection in the space $H$ onto a finite-dimensional subspace.

\section{Examples of suitable spaces $H, B$}\label{sec:2}
 \textbf{Example 1.}  Take $H$ to be the space of real, continuous  functions on $[0,1]$ that have square-integrable first order derivatives. This is a well known  Sobolev space, with standard inner product
\begin{equation*}
\langle f, g \rangle_H := \int_0^1 f(x) g(x) \,{\rm d} x +  \int_0^1 f^\prime(x) g^\prime(x)\,{\rm d} x.
\end{equation*}
Clearly $H$ is continuously embedded in $L_2(0,1)$, since for $f \in H$ we have
\[
\|f\|_{L_2}^2 = \int_0^1 |f(x)|^2 \,\rd x \le \int_0^1 (|f(x)|^2 +|f'(x)|^2)\,{\rm d} x = \|f\|_H^2.
\]

Now define $B$ to be a subspace of $H$ for which also the second derivatives are square-integrable.  On integrating by parts, for $f \in H$ and $g \in B$ we obtain 
\begin{equation*}
\langle f, g \rangle_H := \int_0^1 f(x) g(x)\,{\rm d}x+ f(1)g'(1) - f(0)g'(0) - \int_0^1 f(x) g''(x)\,{\rm d}x,
\end{equation*}
 which with the  Cauchy--Schwarz inequality implies
\[
|\langle f, g \rangle_H| \le \|f\|_{L_2}\|g\|_{L_2} + |f(1)| |g'(1)| + |f(0)| |g'(0)| +\|f\|_{L_2} \|g''\|_{L_2}.
\]
For \eqref{eq:norms_relation} to be satisfied the two central terms must vanish, since $f(0)$  and $f(1)$ are not bounded functionals in $L_2(0,1)$, thus we must define $B$ to be the subspace of the Sobolev space of order $2$  for which $g'(1) = g'(0) = 0$.  We then have
\[
|\langle f, g \rangle_H| \le \|f\|_{L_2}\left(\|g\|_{L_2} + \|g''\|_{L_2}\right) \le \|f\|_{L_2}\left(2(\|g\|_{L_2}^2 + \|g''\|_{L_2}^2)\right)^{1/2},
\]
thus \eqref{eq:norms_relation} is satisfied if the norm in $B$ is defined by 
\[
\|g\|_B^2 := 2\left(\|g\|_{L_2}^2 +\|g'\|_{L_2}^2 +\|g''\|_{L_2}^2\right).
\] 
 Clearly $B$ is continuously embedded in $H$, since 
 \[
 \|g\|_H^2 = \|g\|_{L_2}^2 + \|g'\|_{L_2}^2 \le \tfrac{1}{2}\|g\|_B^2 \quad \text{for all}~g \in B.
 \]

\textbf{Example 2.}  The next example shows that some care may be required in defining the subspace $B$.  We take $H$ to be the same space as in the first example. Then another possible inner product is
\begin{equation}\label{eq:inner_prod2}
\langle f, g \rangle_H := f(0) g(0) + \int_0^1 f^\prime(x) g^\prime(x)\,{\rm d}x.
\end{equation} 
Assuming that $g$ has square-integrable second derivatives, we may integrate by parts to obtain
\[
\langle f, g \rangle_H := f(0) g(0) + f(1)g'(1) - f(0) g'(0) -\int_0^1 f(x) g''(x)\,{\rm d} x.
\]
Equation \eqref{eq:norms_relation} is then clearly satisfied if we define $B \subset H$ by
\begin{equation}
B = \{g \in H: g'' \in L_2(0,1) \mbox{ and } g(0) = g'(0) = g'(1) =0\},
\label{eq:B2}
\end{equation}
and take $\|g\|_B := \|g''\|_{L_2}$, which again is easily seen to be a norm in $B$.

We demonstrate in Section~\ref{sec:4} that $H$ is continuously embedded in $L_2(0,1)$ and likewise that $B$ is continuously embedded in $H$.

 \textbf{Example 3.}  Take $H$ to be the space of real, continuous  functions on $[0,1]$  which have square-integrable first order derivatives and which are periodic in the sense that $f(0)=f(1)$ for $f \in H$. This is a Hilbert space with inner product 
\begin{equation*}
\langle f, g \rangle_H :=\left(\int_0^1 f(x)\,{\rm d} x\right)\left(\int_0^1 g(x)\,{\rm d} x\right) +  \int_0^1 f^\prime(x) g^\prime(x)\,{\rm d} x.
\end{equation*}
By Wirtinger's inequality (see, e.g.,~\cite[Theorem~258]{HLP}), we obtain
$$
\bigg\|f-\int_0^1 f(x)\,{\rm d}x\bigg\|_{L_2}^2\leq \frac{1}{(2\pi)^2}\|f'\|_{L_2}^2\quad\text{for all}~f\in H,
$$
which yields that $\|f\|_{L_2}^2\leq \big(\int_0^1f(x)\,{\rm d}x\big)^2+(2\pi)^{-2}\|f'\|_{L_2}^2\leq \|f\|_H^2$ for all $f\in H$. Therefore $H$ is continuously embedded in $L_2(0,1)$.

Now define $B$ to be a subspace of $H$, one in which the second derivatives are square-integrable, and for which the first derivatives at $0$ and $1$ are equal.  On integrating by parts, and using the periodicity, we obtain 
\begin{align*}
\langle f, g \rangle_H &:=\left(\int_0^1 f(x)\,{\rm d} x\right)\left(\int_0^1 g(x)\,{\rm d} x\right) -  \int_0^1 f(x) g''(x)\,{\rm d} x\\
&=\int_0^1 f(x) \left( \int_0^1 g(y)\,\rd y - g''(x) \right) {\rm d} x,
\end{align*}
implying with the Cauchy--Schwarz inequality
\begin{align*}
|\langle f, g \rangle_H| &\le \|f\|_{L_2}\left(\int_0^1 \left(\int_0^1  g(y)\,\rd y -g''(x)\right)^2{\rm d} x\right)^{1/2}\\
 &= \|f\|_{L_2}\left(\left(\int_0^1 g(y)\,\rd y\right)^2 + \int_0^1 |g''(x)|^2 \,{\rm d} x\right)^{1/2},
\end{align*}
where in the last step we used the fact that $\int_0^1 g''(x) \,\rd x= g'(1) - g'(0) = 0$.  The last equation conforms with \eqref{eq:norms_relation} if the last factor is used to define the norm $\|g\|_B:=\big(\big(\int_0^1g(x)\,{\rm d}x\big)^2+\int_0^1|g''(x)|^2\,{\rm d}x\big)^{1/2}$ in $B$.

To see that $B$ is continuously embedded in $H$, we can argue in a similar way as above to obtain
$$
\|g'\|_{L_2}^2=\bigg\|g'-\int_0^1g'(x)\,{\rm d}x\bigg\|_{L_2}^2\leq \frac{1}{(2\pi)^2}\|g''\|_{L_2}^2\quad\text{for all}~g\in B,
$$
since $\int_0^1g'(x)\,{\rm d}x=g(1)-g(0)=0$. This immediately yields that $\|g\|_H\leq \|g\|_B$ for all $g\in B$, as desired.

\textbf{Example 4.} Let $D\subset \mathbb R^d$, $d\in\{1,2,3\}$, be a nonempty, bounded domain with Lipschitz boundary. We check the relation~\eqref{eq:norms_relation} for the Sobolev spaces $H=H_0^1(D)$ and $B=H^2(D)\cap H_0^1(D)$, which are natural Hilbert spaces to analyse variational solutions to elliptic partial differential equations of the form
$$
\begin{cases}
-\nabla\cdot (a(\bsx)\nabla u(\bsx))=z(\bsx)&\text{for}~\bsx\in D,\\
u|_{\partial D}=0,&
\end{cases}
$$
where the functions $a,z\!:D\to\mathbb R$ are called the diffusion coefficient and source term, respectively. Standard elliptic regularity theory states that if the domain $D$ is convex, the diffusion coefficient $a$ is Lipschitz continuous, and the source term $z\in L_2(D)$, then the solution $u$ belongs to $B$. 

We can define $\langle f,g\rangle_H:=\langle f,g\rangle_{L_2(D)}+\langle \nabla f,\nabla g\rangle_{L_2(D)}$ for $f,g\in H$ so that
$$
\|f\|_{L_2(D)}\leq \|f\|_H\quad\text{for all}~f\in H,
$$
which implies the continuous embedding of $H$ in $L_2(D)$.

Let $f\in H$ and $g\in B$. We obtain by Green's first identity that
\begin{align*}
\langle f,g\rangle_H=\int_D f(\bsx)g(\bsx)\,{\rm d}\bsx-\int_D f(\bsx)\Delta g(\bsx)\,{\rm d}\bsx.
\end{align*}
This allows us to estimate
\begin{align*}
|\langle f,g\rangle_H|&\leq \|f\|_{L_2(D)}(\|g\|_{L_2(D)}+\|\Delta g\|_{L_2(D)}),
\end{align*}
where the second factor specifies a norm $\|g\|_B:=\|g\|_{L_2(D)}+\|\Delta g\|_{L_2(D)}$ in $B$.  We note that this is not a Hilbert space norm, serving as a reminder that the requirement on $B$ is that it be a normed vector space continuously embedded in $H$, not necessarily a Hilbert space. 

It follows from the Poincar\'e inequality that there exists a constant $C>0$ depending on the domain $D$ such that
$$
\|f\|_{L_2(D)}\leq C\|\nabla f\|_{L_2(D)}\quad\text{for all}~f\in H.
$$
Letting $g\in B$, we obtain by Green's first identity that
\begin{align*}
\|\nabla g\|_{L_2(D)}^2&=-\int_D g(\bsx)\Delta g(\bsx)\,{\rm d}\bsx\\
&\leq \|g\|_{L_2(D)}\|\Delta g\|_{L_2(D)}\\
&\leq C\|\nabla g\|_{L_2(D)}\|\Delta g\|_{L_2(D)}.
\end{align*}
This yields $\|\nabla g\|_{L_2(D)}\leq C\|\Delta g\|_{L_2(D)}$ and thus $\|g\|_H\leq (1+C^2)^{1/2}\|g\|_B$ for all $g\in B$. Hence $B$ is continuously embedded in $H$.

 \section{The main theorem}\label{sec:3}
 The following theorem captures the two improved error bounds described in the Introduction.  However, we here state the results in a more general way, replacing $c n^{-\kappa}$ by $M$, where $M>0$ depends on $V$ and $H$.
 \begin{theorem}\label{thm:principal}
For a domain $D\subseteq \mathbb{R}^d$, let $H$ be a Hilbert space and $B$ a normed space of real-valued functions on $D$ such that $H$ is continuously embedded in $L_2(D)$ and $B$ is continuously embedded in $H$, and such that \eqref{eq:norms_relation} holds. Further,  let $P$ be the $H$-orthogonal projection operator onto a finite-dimensional space $V\subset H$.  Assume that for some $M := M(V, H)>0$ we have
\begin{equation}\label{eq:orthog_error0}
 \|f - P f\|_{L_2} \le M \|f\|_H \; \mbox{ for all }    f \in H.  
 \end{equation}
 Then for all $g$ in $B$ we have 
 \[
 \|g - P g\|_{L_2} \le M^2 \|g\|_B,  
 \]
 and
 \[
 \|g - P g\|_H \le M \|g\|_B.  
 \]
\end{theorem}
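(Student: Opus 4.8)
The plan is to work throughout with the residual $e := g - Pg = (I-P)g$ and to lean on three ingredients: that $P$ is an idempotent, self-adjoint projection on $H$; that the hypothesis \eqref{eq:orthog_error0} may be applied to \emph{any} element of $H$, not just to the original target; and the duality relation \eqref{eq:norms_relation}. The whole argument is then a two-line combination of a self-applied projection bound with a single Cauchy--Schwarz estimate, so I would avoid any explicit use of bases for $V$.

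First I would establish an $L_2$-versus-$H$ comparison for $e$ itself. Since $P$ is a projection, $Pe = P(I-P)g = (P - P^2)g = 0$, whence $e - Pe = e$. Applying \eqref{eq:orthog_error0} with the choice $f = e \in H$ then gives $\|e\|_{L_2} = \|e - Pe\|_{L_2} \le M\|e\|_H$. This self-application is the one non-mechanical step: it reads the ``generic'' bound \eqref{eq:orthog_error0} as a statement about the residual, which is legitimate precisely because the projection annihilates that residual.

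Next I would evaluate the $H$-inner product of $e$ with $g$. Because $e = (I-P)g$ is $H$-orthogonal to $V$ while $Pg \in V$, we have $\langle e, Pg\rangle_H = 0$, so $\langle e, g\rangle_H = \langle e, e\rangle_H + \langle e, Pg\rangle_H = \|e\|_H^2$. Invoking \eqref{eq:norms_relation} with $f = e \in H$ and the given $g \in B$ then yields $\|e\|_H^2 = \langle e, g\rangle_H \le \|e\|_{L_2}\,\|g\|_B$.

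Finally I would combine the two inequalities. Substituting $\|e\|_{L_2} \le M\|e\|_H$ into $\|e\|_H^2 \le \|e\|_{L_2}\|g\|_B$ gives $\|e\|_H^2 \le M\|e\|_H\|g\|_B$; cancelling one factor of $\|e\|_H$ (the case $\|e\|_H = 0$ forcing $e = 0$ in $L_2$ as well, by the embedding, so that both conclusions hold trivially) produces the $H$-norm bound $\|e\|_H \le M\|g\|_B$. Feeding this back into $\|e\|_{L_2} \le M\|e\|_H$ then delivers the doubled-rate $L_2$ bound $\|e\|_{L_2} \le M^2\|g\|_B$. I expect the main obstacle to be purely one of recognition rather than computation, namely spotting that $\eqref{eq:orthog_error0}$ applied to $e$ is what converts the single-rate hypothesis into the doubled rate; once that is seen, orthogonality and Cauchy--Schwarz finish the proof.
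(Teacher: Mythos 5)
Your proposal is correct and follows essentially the same route as the paper's proof: the same self-application of \eqref{eq:orthog_error0} to the residual $g-Pg$, the same orthogonality-plus-\eqref{eq:norms_relation} estimate $\|g-Pg\|_H^2\le\|g-Pg\|_{L_2}\|g\|_B$, and the same cancellation at the end (you merely cancel $\|e\|_H$ to get the $H$-bound first and then deduce the $L_2$-bound, where the paper cancels $\|g-Pg\|_{L_2}$ first; the two orderings are equivalent). Your explicit handling of the degenerate case $\|e\|_H=0$ is a small point of extra care not spelled out in the paper.
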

  
  \begin{proof}
 We first apply \eqref{eq:orthog_error0} to $f = g - P g$, noting that it belongs to $H$ since $g \in B \subset H$ and $Pg \in H$.  Using  the idempotent property of $P$ we have $P(g - Pg) = Pg - Pg  = 0$, thus by \eqref{eq:orthog_error0} we obtain
\begin{equation}\label{eq:H0error_bdd_by_2norm}
\|g -Pg\|_{L_2}  = \|(g-Pg) - P(g - Pg)\|_{L_2}
\le M \|g - P g\|_H.  
 \end{equation}
 Now 
 \begin{align*}
  \|g - P g\|_H^2
  &= \langle g - P g,g - P g\rangle_H\\
  &=  \langle g - P g,g\rangle_H,
  \end{align*}
since by definition $g - P g$ is $H$-orthogonal to every element of $V$.  It follows using  \eqref{eq:norms_relation} with $f := g - Pg$ that
\begin{equation}\label{eq:H1error_bdd_by_2norm}
\|g - Pg\|_H^2 \le \|g - P g\|_{L_2} \; \|g\|_B.
\end{equation}

From \eqref{eq:H0error_bdd_by_2norm} and \eqref{eq:H1error_bdd_by_2norm} we now obtain
\begin{align*}
\|g - P g\|_{L_2}^2
&\le M^2\|g - P g\|^2_H\\
&\le M^2 \|g - P g\|_{L_2} \|g\|_B.
\end{align*}
The first result now follows by cancelling $ \|g - P g\|_{L_2}$.

The second result, in squared form, then follows by substituting the first result into~\eqref{eq:H1error_bdd_by_2norm}.
 \end{proof}

\noindent \emph{Remark.} A similar argument to that above  was used by Wendland in~\cite[Theorem 11.23]{HW}.

\section{Kernel interpolation in reproducing kernel Hilbert spaces}\label{sec:4}
In this section we take  $H$ to be a reproducing kernel Hilbert space of real-valued functions defined on a domain $D$.  We show that the general theorem, Theorem~\ref{thm:principal}, has direct application to interpolation with respect to a set of kernels anchored at the interpolation points.

A reproducing  kernel Hilbert space (RKHS) is a Hilbert space in which point evaluation is bounded.  By the Riesz representation theorem, for every RKHS there exists a kernel $K(\cdot,\cdot)$, with the properties  
\begin{equation}\label{eq:KinH}
K(\bsy, \cdot)\in H \quad \mbox{  for all }\bsy \in D,
\end{equation}
 and 
 \begin{equation}\label{eq:reprod_prop}
 \langle K(\bsy, \cdot), f\rangle_H = f(\bsy)  \quad \mbox{ for all } f \in H \mbox{ and } \bsy \in D,
 \end{equation}
 the latter being the reproducing property.
 
 It is easily seen, using
 \[
 K(\bsy, \bsx) =   \langle K(\bsy, \cdot), K(\bsx, \cdot)\rangle _H,
 \] 
 that the reproducing  kernel is symmetric,
 \begin{equation*}
 K(\bsy, \bsx) = K(\bsx, \bsy)\quad  \mbox{ for all }\bsy, \bsx \in D.
\end{equation*}
The kernel is unique (but not necessarily known).  The reproducing kernel is positive definite, in the sense that
\begin{equation}\label{eq:pos_def}
\sum_{j = 1}^N  \sum_{k = 1}^N  b_j K(\bst_j, \bst_k) b_k \ge 0 
\end{equation}
for all $N \in \mathbb{N}$, all choices of points $\bst_j $ and all real  $b_j, j = 1,\ldots, N$, with equality if and only if all $b_j$ vanish.  The reverse is also true, that to every positive definite kernel there is a reproducing kernel Hilbert space $H$.  For further information about reproducing kernel Hilbert spaces, see~\cite{Aron} or the more recent source~\cite[Chapter 10]{HW}.  

\vspace{.2cm}
 \textbf{Example 2 continued.} Recall that $H$ is the space of continuous real-valued functions $f$ defined on $[0,1]$ whose first  derivatives are square-integrable, and that the inner product in $H$ is given by \eqref{eq:inner_prod2}.  It can easily be seen using that inner product  that the kernel 
 \[
 K(x,y) : = 1+ \min(x,y)
 \]
satisfies both \eqref{eq:KinH} and  \eqref{eq:reprod_prop} for all $f \in H$, and that  $K(x,y)$ is therefore the reproducing kernel of $H$.
 
We can now show that $H$ is continuously embedded in $L_2(0,1)$ and that $B$ defined by \eqref{eq:B2} is continuously embedded in $H$. By  using the reproducing property $f(x)=\langle f,K(\cdot,x)\rangle_H$ for all $f\in H$ and $x\in [0,1]$, we obtain
\begin{align}
\|f\|_{L_2}^2&=\int_0^1 |f(x)|^2\,{\rm d}x = \int_0^1 \langle f,K(\cdot,x)\rangle_H^2\,{\rm d}x \leq \int_0^1 \|f\|_H^2 \|K(x,\cdot)\|_H^2 \,{\rm d} x \nonumber \\ &= \|f\|_H^2 \int_0^1 K(x,x)\, {\rm d} x = \frac32\|f\|_H^2\quad \text{for all}~f\in H,
\label{eq:Kemb}
\end{align}
which means that $H$ is continuously embedded in $L_2(0,1)$. Let $g\in B$. To show the continuous embedding of $B$ in $H$, we integrate by parts and use  \eqref{eq:B2} to obtain
\begin{align*}
\|g\|_H^2&=g(0)^2+\int_0^1 |g'(x)|^2\,{\rm d}x\\
&=g(1)g'(1)-g(0)g'(0)-\int_0^1 g(x)g''(x)\,{\rm d}x\\
&\leq \|g\|_{L_2}\|g''\|_{L_2}\leq \sqrt{\frac32}\|g\|_H\|g\|_B,
\end{align*}
where the final step follows from~\eqref{eq:Kemb}. Therefore $\|g\|_H\leq \sqrt{\frac32}\|g\|_B$, showing the continuous embedding  of $B$ in $H$.

For the general case, a kernel interpolant of a function $f \in H$ is an approximation of the form 
 \begin{equation}\label{eq:ker_int}
 f_{n}(\bsy) :=  \sum_{k = 1}^n a_k K(\bst_k, \bsy),
 \end{equation}
where $\bst_1, \ldots, \bst_n$ are distinct points in $D$, and the coefficients $a_k$ are  determined by the interpolation condition
\begin{equation}\label{eq:interp_cond}
f_{n}(\bst_k) = f(\bst_k),\quad k = 1,\ldots, n.
\end{equation}
The linear system necessarily has a unique solution $(a_k)_{k=1}^n$, because of the positive definite property \eqref{eq:pos_def} of the reproducing kernel.

Kernel methods have a long history, tracing back to Grace Wahba's seminal work on splines~\cite{GW}. They have an important role in radial basis functions see~\cite{HW}, and have recently been used in high dimensional approximation, see for example~\cite{GRZ19,KKKNS22,KKS23,ZKH09,ZLH06}.

The kernel interpolant fits the scenario of the Introduction and Theorem~\ref{thm:principal} as long as there exists a suitable subspace $B \subset H$ such that  \eqref{eq:norms_relation} is satisfied.  The first step is to define 
\[
V:= \mathrm{span} \{K(\bst_1, \cdot), K(\bst_2, \cdot), \ldots, K(\bst_n, \cdot)\},
\]
which by property \eqref{eq:KinH} is a finite-dimensional subspace of $H$.

The crucial point is the observation that, for $H$ a reproducing kernel Hilbert space, the kernel interpolant  $f_{n}$ is the $H$-orthogonal projection of $f$ on $V\subset H$.  This is an immediate consequence of  the reproducing property \eqref{eq:reprod_prop} together with the interpolation condition \eqref{eq:interp_cond}, because for all $f \in H$ we have
 \[
 \langle f - f_{n}, K(\bst_k, \cdot)\rangle_{H} =  f(\bst_k) - f_{n}(\bst_k) = 0,
 \] 
 from which it follows that $f - f_n$ is $H$-orthogonal to every element of $V$.
 
The following proposition thus is an immediate consequence of Theorem~\ref{thm:principal}. 

\begin{theorem}\label{thm:ker_interp}
 Let $H$ be a reproducing kernel Hilbert space continuously embedded in $L_2(D)$ and with kernel $K$, and let  $B$ be a normed space continuously embedded in  $H$ such that  \eqref{eq:norms_relation} holds.  For $\bst_1, \ldots, \bst_n$ a set of distinct points in $D$, let $f_{n}$ be the kernel interpolant defined by \eqref{eq:ker_int} and \eqref{eq:interp_cond}.  Assume that for some $c>0$ and some $\kappa> 0$ we have
\begin{equation*}
 \|f - f_{n}\|_{{L_2}(D)} \le c n^{-\kappa} \|f\|_H \; \mbox{ for all }    f \in H.  
 \end{equation*}
 Then for all $g$ in $B \subset H$, with kernel interpolant $g_n$ using the same points $\bst_1, \ldots,\bst_n$, we have
 \[
 \|g - g_n\|_{{L_2}(D)} \le c^2   n^{-2\kappa}\|g\|_{B},  
 \]
 and 
 \[
 \|g - g_n\|_H \le  c n^{-\kappa}\|g\|_{B}.  
 \]
\end{theorem}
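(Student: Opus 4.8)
The plan is to reduce the statement directly to Theorem~\ref{thm:principal} by identifying the kernel interpolant with an $H$-orthogonal projection, so that essentially no new work is required. First I would set $V := \mathrm{span}\{K(\bst_1,\cdot),\ldots,K(\bst_n,\cdot)\}$, which by property \eqref{eq:KinH} is a finite-dimensional subspace of $H$, and let $P$ denote the $H$-orthogonal projection onto $V$. The crucial observation—already recorded in the discussion preceding the theorem—is that for every $f \in H$ the kernel interpolant $f_n$ coincides with $Pf$: the reproducing property \eqref{eq:reprod_prop} together with the interpolation conditions \eqref{eq:interp_cond} give $\langle f - f_n, K(\bst_k,\cdot)\rangle_H = f(\bst_k) - f_n(\bst_k) = 0$ for each $k$, whence $f - f_n$ is $H$-orthogonal to every element of $V$ and therefore $f_n = Pf$. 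In particular $g_n = Pg$ for every $g \in B \subset H$, using the same points and hence the same subspace $V$.

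With this identification in place, I would set $M := c\,n^{-\kappa}$. The assumed bound $\|f - f_n\|_{L_2(D)} \le c\,n^{-\kappa}\|f\|_H$ for all $f \in H$ then reads exactly as hypothesis \eqref{eq:orthog_error0} of Theorem~\ref{thm:principal}, namely $\|f - Pf\|_{L_2} \le M\|f\|_H$ for all $f \in H$. The remaining hypotheses of Theorem~\ref{thm:principal}—that $H$ is continuously embedded in $L_2(D)$, that $B$ is continuously embedded in $H$, and that the relation \eqref{eq:norms_relation} holds—are precisely the standing assumptions of the present theorem.

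Invoking Theorem~\ref{thm:principal} then yields, for every $g \in B$, the two conclusions $\|g - Pg\|_{L_2} \le M^2\|g\|_B = c^2 n^{-2\kappa}\|g\|_B$ and $\|g - Pg\|_H \le M\|g\|_B = c\,n^{-\kappa}\|g\|_B$. Rewriting $Pg$ as $g_n$ gives exactly the asserted bounds, completing the argument.

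There is essentially no obstacle to overcome here, since the entire content has been front-loaded into the equivalence of kernel interpolation with orthogonal projection and into Theorem~\ref{thm:principal} itself. The one point deserving a moment's attention is that the single inequality \eqref{eq:orthog_error0} must apply uniformly to both $f$ and $g$: this is guaranteed because the hypothesis fixes the \emph{same} interpolation points $\bst_1,\ldots,\bst_n$ for both functions, so the subspace $V$ and the projection $P$ are identical, and because the constant $M = c\,n^{-\kappa}$ depends only on $V$ and $H$ (equivalently, only on the points and the space), not on the particular function being approximated.
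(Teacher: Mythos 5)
Your proposal is correct and follows exactly the paper's own route: identifying $f_n$ with the $H$-orthogonal projection onto $V=\mathrm{span}\{K(\bst_1,\cdot),\ldots,K(\bst_n,\cdot)\}$ via the reproducing property and the interpolation conditions, and then invoking Theorem~\ref{thm:principal} with $M=c\,n^{-\kappa}$. No discrepancies to report.
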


\subsection{A concrete application to kernel interpolation}\label{sec:41}

In recent years there has been increasing interest in the use of kernel interpolation in challenging large-scale computations.  In particular, in the papers~\cite{KKKNS22,KKS23}  kernel interpolation was used for the solution of a parametric partial differential equation with a very large number of parameters -- in the first paper detailed calculations were carried out with 10 and 100 independent parameters, while in the second paper the number of parameters was extended to 1,000.  

Our purpose in this subsection is not to review the substance of those papers, but rather to present the high-dimensional setting encountered there, and to identify (in the notation of the present paper) the function spaces $H$ and $B$, along with the reproducing kernel $K$ of $H$.  We will be then be able to see that Theorem~\ref{thm:ker_interp} applies directly to the computations in those papers, and hence be able to explain the higher than expected rates of convergence reported there.

In the papers~\cite{KKKNS22,KKS23} the space $D$ in which the parameters lie is $[0,1]^d$, with $d$ typically large. The function space $H$ is a Sobolev space of  dominating mixed smoothness $\alpha \in \mathbb{N}$.  It is a space of periodic functions (that is, each function in $H$ is $1$-periodic with respect to each component of the argument),
which is a natural high-dimensional generalisation of Example 3 in Section~\ref{sec:2}.  However, it proves to be simpler to use a Fourier representation to describe the space and its norm.

 Specifically, for arbitrary $\alpha>1/2$ we define $f$ to be in the space $H := H_{\alpha, \bsgamma}$ if 
 \begin{equation}\label{eq:norm_ag}
 \|f\|_{\alpha, \bsgamma}^2 := \sum_{\bsh\in \mathbb{Z}^d} r_{\alpha, \bsgamma}(\bsh)|\widehat{f}(\bsh)|^2 < \infty,
 \end{equation}
 where 
 \[
 \widehat{f}(\bsh):=\int_{[0,1]^d}f(\bsy)\mathrm{e}^{-2\pi \mathrm{i} \bsh\cdot \bsy}\,\rd \bsy,
 \]
 and where for $\bsh \in \mathbb{Z}^d$ 
 \begin{equation}\label{eq:r_ag}
 r_{\alpha, \bsgamma}(\bsh) := \frac{1}{\gamma_{\mathrm{supp}(\bsh)}}\prod_{j \in \mathrm{supp}(\bsh)}|h_j|^{2\alpha},
 \end{equation}
 with $\mathrm{supp}(\bsh) := \{j \in \{1,\ldots,d\}:h_j \ne 0\}$.  The inner product in $H$ is
\begin{equation}\label{eq:inner_ag}
 \langle f,g \rangle_{\alpha, \bsgamma} := \sum_{\bsh\in \mathbb{Z}^d} r_{\alpha, \bsgamma}(\bsh)\widehat{f}(\bsh)\overline{\widehat{g}(\bsh)} < \infty.
 \end{equation}

 (To avoid confusion we note that the present definition of the parameter $\alpha$ follows~\cite{KKS23}, not~\cite {KKKNS22}.  The two definitions differ by a factor of $2$.)
 
 The above definition of the norm in $H$  incorporates ``weights'' $\gamma_\setu$, which are prescribed positive numbers, one for each subset $\setu\subseteq \{1, \ldots, d\}$.  It is by now well known that if the weights are omitted (that is, by setting each $\gamma_\setu$ equal to $1$), then most problems become ``intractable'' as $d \to \infty$.  In the paper~\cite{KKKNS22} much effort is devoted to choosing weights that ensure that the interpolation error is  bounded as $d\to\infty$, but that topic is outside the scope of this paper.  For the present purposes we can assume that the weights are all given.
 
It is easily seen (by verifying the properties \eqref{eq:KinH} and \eqref{eq:reprod_prop}) that $H$ is a reproducing kernel Hilbert space, with reproducing kernel
\begin{equation*}
K(\bsy, \bsx)= \sum_{\setu \subseteq{\{1, \ldots, d\}}}\gamma_\setu \prod_{k\in \setu}\eta_\alpha(y_k, x_k) = \sum_{\bsh \in \mathbb{Z}^d}
\frac{e^{2\pi i \bsh \cdot (\bsy - \bsx)}}{r_{\alpha, \bsgamma}(\bsh)}, 
\end{equation*}
where
\[
\eta_\alpha(y,x) := \sum_{h \ne 0} \frac{e^{2\pi i h (y - x)}}{|h|^{2\alpha}},
\] 
which can be expressed explicitly as a multiple of  $B_{2\alpha}(\{y - x\})$, where $B_\tau$ denotes the Bernoulli polynomial of degree $\tau$, and the braces indicate taking the fractional part of the argument.
 
 To apply Theorem~\ref{thm:ker_interp} it remains to identify a suitable embedded subset $B \subset H$.  The following proposition tells us that we may take $B:= H_{2\alpha, \gamma^2}$ which is a weighted Hilbert space of dominating mixed smoothness of order  $2\alpha$, so having twice the smoothness parameter of $H$, and with each weight $\gamma_\setu$ replaced by its square.  
 \begin{proposition}
 With $H_{\alpha, \gamma}$ defined as above, there holds
 \begin{equation*}
 |\langle f,g \rangle_{\alpha, \bsgamma}| \le \|f\|_{L_2} \|g\|_{2\alpha, \bsgamma^2} \quad \mbox{ for all }  f \in H_{\alpha,\bsgamma},\; g \in H_{2\alpha,\bsgamma^2},
 \end{equation*}
 where the inner product and norm are defined by \eqref{eq:inner_ag} and \eqref{eq:norm_ag}, and $\bsgamma^2  :=  (\bsgamma_\setu^2)_{\setu\subseteq\{1,\ldots,d\}}$.
 \end{proposition}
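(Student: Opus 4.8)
The plan is to do everything on the Fourier side, where the inner product $\langle\cdot,\cdot\rangle_{\alpha,\bsgamma}$, the $L_2$ norm, and the norm $\|\cdot\|_{2\alpha,\bsgamma^2}$ all have diagonal representations indexed by $\bsh\in\mathbb{Z}^d$. The single structural fact that makes the proposition work is that the weight sequence defining $B=H_{2\alpha,\bsgamma^2}$ is exactly the \emph{square} of the weight sequence defining $H=H_{\alpha,\bsgamma}$. Indeed, directly from the product formula \eqref{eq:r_ag},
\[
r_{\alpha,\bsgamma}(\bsh)^2=\bigg(\frac{1}{\gamma_{\mathrm{supp}(\bsh)}}\prod_{j\in\mathrm{supp}(\bsh)}|h_j|^{2\alpha}\bigg)^2=\frac{1}{\gamma_{\mathrm{supp}(\bsh)}^2}\prod_{j\in\mathrm{supp}(\bsh)}|h_j|^{4\alpha}=r_{2\alpha,\bsgamma^2}(\bsh),
\]
so that $\|g\|_{2\alpha,\bsgamma^2}^2=\sum_{\bsh\in\mathbb{Z}^d}r_{\alpha,\bsgamma}(\bsh)^2\,|\widehat{g}(\bsh)|^2$. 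I would record this identity first, as the rest is a single application of Cauchy--Schwarz.

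Next I would invoke Parseval's identity for the orthonormal Fourier basis $\{\mathrm{e}^{2\pi\mathrm{i}\bsh\cdot\,}\}_{\bsh\in\mathbb{Z}^d}$ on $[0,1]^d$, namely $\|f\|_{L_2}^2=\sum_{\bsh\in\mathbb{Z}^d}|\widehat{f}(\bsh)|^2$, valid for $f\in L_2([0,1]^d)$ and in particular for $f\in H_{\alpha,\bsgamma}$ (which is continuously embedded in $L_2$). With this in hand, the whole proof is to split the factor $r_{\alpha,\bsgamma}(\bsh)$ in the inner product across the two Fourier coefficients and apply the Cauchy--Schwarz inequality on $\ell_2(\mathbb{Z}^d)$:
\[
|\langle f,g\rangle_{\alpha,\bsgamma}|=\bigg|\sum_{\bsh\in\mathbb{Z}^d}\widehat{f}(\bsh)\,\overline{r_{\alpha,\bsgamma}(\bsh)\widehat{g}(\bsh)}\bigg|\le\bigg(\sum_{\bsh\in\mathbb{Z}^d}|\widehat{f}(\bsh)|^2\bigg)^{1/2}\bigg(\sum_{\bsh\in\mathbb{Z}^d}r_{\alpha,\bsgamma}(\bsh)^2|\widehat{g}(\bsh)|^2\bigg)^{1/2},
\]
where I have used that $r_{\alpha,\bsgamma}(\bsh)$ is real and positive. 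By Parseval the first factor is $\|f\|_{L_2}$, and by the squaring identity the second factor is precisely $\|g\|_{2\alpha,\bsgamma^2}$, which is the claimed bound.

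There is essentially no hard step here; the only points demanding a word of care are convergence and the legitimacy of the splitting. I would note that since $g\in H_{2\alpha,\bsgamma^2}$ the sequence $(r_{\alpha,\bsgamma}(\bsh)\widehat{g}(\bsh))_{\bsh}$ lies in $\ell_2(\mathbb{Z}^d)$ with $\ell_2$-norm equal to $\|g\|_{2\alpha,\bsgamma^2}$, while $(\widehat{f}(\bsh))_{\bsh}\in\ell_2(\mathbb{Z}^d)$ because $f\in L_2$; hence the Cauchy--Schwarz estimate guarantees that the defining series for $\langle f,g\rangle_{\alpha,\bsgamma}$ converges absolutely, so the manipulation is justified and the bound holds for every $f\in H_{\alpha,\bsgamma}$ and $g\in H_{2\alpha,\bsgamma^2}$.
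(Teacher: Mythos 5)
Your proposal is correct and follows essentially the same argument as the paper: write the inner product as a sum over $\bsh\in\mathbb{Z}^d$, attach the weight $r_{\alpha,\bsgamma}(\bsh)$ to the $\widehat{g}$ factor, apply Cauchy--Schwarz, and use Parseval together with the identity $r_{\alpha,\bsgamma}(\bsh)^2=r_{2\alpha,\bsgamma^2}(\bsh)$. Your additional remarks on absolute convergence are a welcome but minor refinement of the paper's one-line computation.
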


\begin{proof}
From \eqref{eq:inner_ag} and the Cauchy--Schwarz inequality we have 
\begin{align*}
\left| \langle f, g \rangle_{\alpha,\bsgamma}\right|
&= \left| \sum_{\bsh \in\mathbb{Z}^d}  \widehat{f} (\bsh) \overline{\widehat{g}(\bsh)} \,   r_{\alpha,\bsgamma}(\bsh) \right|\\
&\le \left(\sum_{\bsh\in\mathbb{Z}^d} |\widehat{f}(\bsh)|^2  \right)^{1/2}   \left( \sum_{\bsh\in\mathbb{Z}^d}  |\widehat{g}(\bsh)|^2  r_{\alpha, \gamma}(\bsh)^2\right)^{1/2}\\
&=  || f ||_{L_2}    || g ||_{2\alpha, \bsgamma^2},
\end{align*}
where we used $r_{\alpha, \bsgamma}(\bsh)^2 = r_{2\alpha, \bsgamma^2}(\bsh)$, which follows immediately from \eqref{eq:r_ag}.
\end{proof}

The following corollary now follows from the proposition above and Theorem~\ref{thm:ker_interp} with $H$ replaced by $H_{\alpha, \bsgamma}$ and $B$ replaced by $H_{2\alpha, \bsgamma^2}$.  It is stated in a form that shows directly the doubling of the convergence rate.

\begin{corollary}\label{cor:weighted_ker_interp}
 For given $\alpha >0$, let $H_{\alpha, \bsgamma}$ be the reproducing kernel Hilbert space constructed as above. With $K_{\alpha,\bsgamma}$ denoting the corresponding reproducing kernel, let $f_{\alpha,\bsgamma, n}$ be the kernel interpolant for this kernel that satisfies  \eqref{eq:ker_int} and \eqref{eq:interp_cond}.  Assume that for some $c>0$ and some $\kappa> 0$ we have
\begin{equation}\label{eq:wtd_error}
 \|f - f_{\alpha,\bsgamma,n}\|_{L_2} \le c n^{-\kappa} \|f\|_{\alpha, \bsgamma} \; \mbox{ for all }    f \in H_{\alpha,\bsgamma}.  
 \end{equation}
 Then for all $g\in H_{2\alpha, \bsgamma^2}\subset H_{\alpha,\bsgamma}$ we have 
 \[
 \|g - g_{\alpha, \bsgamma, n}\|_{L_2} \le c^2   n^{-2\kappa}\|g\|_{2\alpha, \bsgamma^2},  
 \]
 and 
 \[
 \|g - g_{\alpha, \bsgamma, n}\|_{\alpha, \bsgamma} \le  c n^{-\kappa}\|g\|_{2\alpha, \bsgamma^2}.  
 \]
\end{corollary}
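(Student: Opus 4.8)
The plan is to obtain the corollary as a direct instance of \RefThm{thm:ker_interp}, taking $H=H_{\alpha,\bsgamma}$ and $B=H_{2\alpha,\bsgamma^2}$ and letting $c n^{-\kappa}$ play the role of the quantity $M$ there; the assumed bound \eqref{eq:wtd_error} is then precisely the error hypothesis of that theorem. Before the theorem can be invoked, three structural facts must be in place: that $H_{\alpha,\bsgamma}$ is a reproducing kernel Hilbert space continuously embedded in $L_2$, that $H_{2\alpha,\bsgamma^2}$ is continuously embedded in $H_{\alpha,\bsgamma}$, and that the relation \eqref{eq:norms_relation} holds between the two spaces. The last of these is exactly the content of the Proposition proved immediately above, and the reproducing kernel structure with kernel $K_{\alpha,\bsgamma}$ was already established by verifying \eqref{eq:KinH} and \eqref{eq:reprod_prop}. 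Hence the only genuine work is to confirm the two continuous embeddings, after which the conclusion is immediate.

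Both embeddings rest on a single observation: the weight $r_{\alpha,\bsgamma}$ is bounded away from zero uniformly in $\bsh$. Indeed, from \eqref{eq:r_ag}, since $|h_j|\ge 1$ for every $j\in\mathrm{supp}(\bsh)$ and $\alpha>0$, the product $\prod_{j\in\mathrm{supp}(\bsh)}|h_j|^{2\alpha}$ is at least $1$, so that
\[
r_{\alpha,\bsgamma}(\bsh)\ge \frac{1}{\gamma_{\mathrm{supp}(\bsh)}}\ge m:=\Big(\max_{\setu\subseteq\{1,\ldots,d\}}\gamma_\setu\Big)^{-1}>0\quad\text{for all }\bsh\in\mathbb{Z}^d,
\]
the maximum being finite because $\{1,\ldots,d\}$ has only finitely many subsets. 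I would then invoke Parseval's identity $\|f\|_{L_2}^2=\sum_{\bsh\in\mathbb{Z}^d}|\widehat{f}(\bsh)|^2$ together with this lower bound to get $\|f\|_{L_2}^2\le m^{-1}\|f\|_{\alpha,\bsgamma}^2$, which gives the continuous embedding of $H_{\alpha,\bsgamma}$ in $L_2$. For the second embedding I would combine the same bound with the identity $r_{\alpha,\bsgamma}(\bsh)^2=r_{2\alpha,\bsgamma^2}(\bsh)$ already used in the Proposition: since $r_{\alpha,\bsgamma}(\bsh)\le m^{-1}r_{\alpha,\bsgamma}(\bsh)^2$, summing against $|\widehat{g}(\bsh)|^2$ yields $\|g\|_{\alpha,\bsgamma}^2\le m^{-1}\|g\|_{2\alpha,\bsgamma^2}^2$, establishing both the set inclusion $H_{2\alpha,\bsgamma^2}\subseteq H_{\alpha,\bsgamma}$ and the continuity of the embedding. (One could instead deduce the norm bound abstractly from \eqref{eq:norms_relation} via $\|g\|_{\alpha,\bsgamma}^2=\langle g,g\rangle_{\alpha,\bsgamma}\le\|g\|_{L_2}\|g\|_{2\alpha,\bsgamma^2}$ once the inclusion is known, but the Fourier route supplies both at once.)

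With these three hypotheses verified, \RefThm{thm:ker_interp} applies verbatim and delivers the two displayed bounds, namely the doubled rate $c^2 n^{-2\kappa}$ in $L_2$ and the rate $c n^{-\kappa}$ in the $H_{\alpha,\bsgamma}$-norm. I do not anticipate any serious obstacle: the argument is essentially bookkeeping, and the only step demanding care is the uniform positivity of $m$, which hinges on $d$ being finite. If one wished instead to track the dependence of the constants as $d\to\infty$, this lower bound would have to be re-examined, but that lies outside the present scope.
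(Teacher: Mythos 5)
Your proposal matches the paper's own argument: the corollary is obtained there exactly as you describe, by applying Theorem~\ref{thm:ker_interp} with $H$ replaced by $H_{\alpha,\bsgamma}$ and $B$ by $H_{2\alpha,\bsgamma^2}$, with the Proposition supplying \eqref{eq:norms_relation}. Your explicit verification of the two continuous embeddings via the uniform lower bound on $r_{\alpha,\bsgamma}$ and the identity $r_{\alpha,\bsgamma}(\bsh)^2=r_{2\alpha,\bsgamma^2}(\bsh)$ is correct and simply fills in details the paper leaves implicit.
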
 
  
A final comment is that in the papers~\cite{KKKNS22,KKS23} the hypothesis~\eqref{eq:wtd_error} was established with $\kappa = \tfrac{1}{2}\alpha-\delta$ for arbitrary $\delta > 0$, a result known (because of the restricted nature of the interpolation points used there) to be not improvable for general $f\in H_{\alpha, \bsgamma}$.  Since the target function $f$  in those papers is known to be infinitely differentiable, and hence lying in $H_{2\alpha, \bsgamma^2}$, the results in those papers can now be supplemented by the conclusions of this corollary.  In particular, the asymptotic rate of convergence in the $L_2$ norm is now proven to be (at least) $n^{-\alpha}$ rather than merely $n^{-\alpha/2}$,  while the asymptotic convergence rate in the $H_{\alpha, \bsgamma}$ norm (for which no previous estimate was available) is now known to be $n^{-\alpha/2}$.
We note that Corollary~\ref{cor:weighted_ker_interp} also gives values for the ``constants'' in the new bounds.  Those constants may be very large, and in the context of~\cite{KKKNS22,KKS23} may also depend strongly on $d$ through the appearance of the $H_{2\alpha, \bsgamma^2}$ norm.

\textbf{Example 5.} 
We consider the rate doubling phenomenon numerically for the Korobov spaces $H:=H_{1,\boldsymbol\gamma}$ and $B:=H_{2,\boldsymbol\gamma^2}$ with positive weights $\boldsymbol\gamma=(\gamma_{\setu})_{\setu\subseteq\{1,\ldots,d\}}$  when $d=10$.  These spaces are equipped with norms given by~\eqref{eq:norm_ag} for $\alpha=1$ and $\alpha=2$, respectively. We consider here the product weights given by $\gamma_{\setu}=\prod_{j\in \setu}\gamma_j$, where we set $\gamma_j=0.5j^{-1.5}$ for $j\in\{1,\ldots,d\}$.

The kernel interpolant is constructed using the reproducing kernel $K_1$ of $H$, and the point set is given by
$$
\bst_\ell = \bigg\{\frac{\ell \boldsymbol z}{n}\bigg\},\quad \ell\in\{0,\ldots,n-1\},
$$
where the generating vector $\boldsymbol z\in\mathbb N^d$ is obtained by using the component-by-component (CBC) algorithm described in~\cite{ckns21} for the product weights $\boldsymbol\gamma=(\gamma_{\setu})_{\setu\subseteq\{1,\ldots,d\}}$ in $H$.

We compute the $L_2$ approximation errors for 
\begin{align*}
f(\bsx)=K_1(\bsx,e^{-1}\mathbf 1),\quad \bsx\in[0,1]^d,
\end{align*}
where $\mathbf 1\in\mathbb R^d$ is a vector of ones and $e$ denotes Euler's number, as well as
$$
g(\bsx)=K_2(\bsx,e^{-1}\mathbf 1),\quad \bsx\in[0,1]^d,
$$
where $K_2$ denotes the reproducing kernel of $B$ corresponding to the squared weights $\boldsymbol\gamma^2$. Now $f\in H$ and $g\in B$, but $f\not\in B$. In this setting, it is known that the kernel interpolant satisfies~\eqref{eq:first_ass} with $\kappa=\alpha/2$, see~\cite[Theorem~3.2]{KKKNS22}. Thus the expected convergence rates are $-0.5$ and $-1.0$ for $f$ and $g$, respectively.

The $L_2$ approximation errors $\|f-f_n\|_{L_2}$ and $\|g-g_n\|_{L_2}$ were obtained by using a $1000$-point Monte Carlo approximation. The numerical results are displayed in Figure~\ref{fig:numex}. As expected, the function $f$ belonging to $H$, but not $B$, exhibits a convergence rate close to $-0.5$ while the function $g$ has a convergence rate close to $-1.0$.

\begin{figure}[!t]
\begin{center}
\includegraphics[width=.74\textwidth]{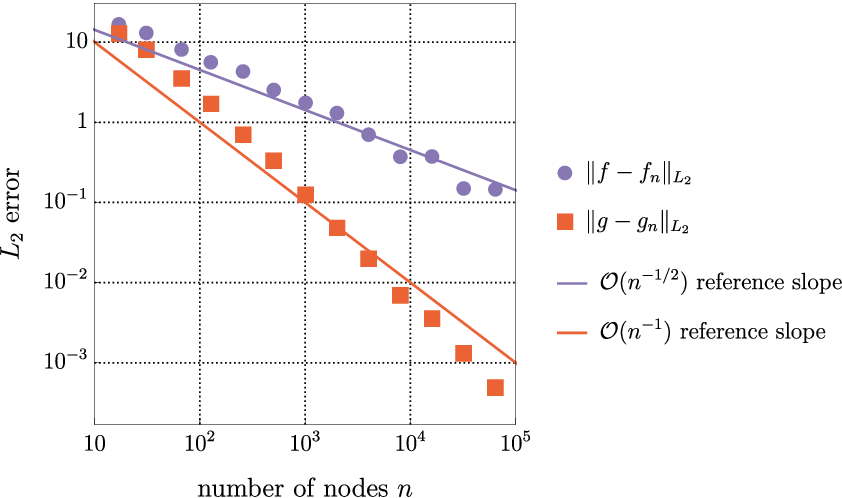}
\end{center}
\caption{Numerical $L_2$ approximation errors for the functions $f$ and $g$  considered in Example~5.}\label{fig:numex}
\end{figure}

\section{Radial basis function interpolation}\label{sec:5} 
 
 Radial basis functions provide an important and well studied method for approximating a continuous function given its values at scattered data points, see~\cite{HW}.  Our aim here is not to provide a guide to radial basis functions, but rather to demonstrate how the general Theorem~\ref{thm:principal} can be extended to radial basis function interpolation.  In general we follow the conventions of the monograph~\cite{HW}.
 
\begin{definition}\rm 
A real, even, continuous function $\Phi:\mathbb{R}^d \to \mathbb{R}$  
is said to be \emph{conditionally positive semi-definite} of order $m\ge 0$ if for all $n \in \mathbb{N}$ and all choices of points $\bst_1, \ldots,\bst_n \in \mathbb{R}^d$ we have
\begin{equation}\label{eq:conditional_1}
\sum_{k = 1}^n \sum_{j=1}^n a_k \Phi(\bst_k - \bst_j) a_j \ge 0
\end{equation}
for all $\bsa  = (a_1, \ldots, a_n) \in \mathbb{R}^n$ that satisfy
\begin{equation}\label{eq:conditional_2}
\sum_{k=1}^n a_k p(\bst_k) = 0 \mbox{ for all polynomials } p \mbox{ of degree less than } m.
\end{equation}
It is said to be \emph{conditionally positive definite of order $m$} if equality in~\eqref{eq:conditional_1} holds only for $\bsa =\bszero$.  We note that the condition~\eqref{eq:conditional_2} becomes increasingly restrictive on the coefficients $a_k$ as $m$ increases.
\end{definition}

 We will assume that the order $m$ in the definition takes its smallest possible value. 
 
Some popular choices for $\Phi$ in the context of radial basis function interpolation include Gaussians, multiquadrics, radial powers, and thin-plate splines. New stable bases for conditionally positive definite kernel-based spaces have been recently developed in~\cite{MDME24}.
  
 \vspace{.2cm}
 \textbf{Example 6.}  
 The thin-plate spline $\Phi(\bsy):= \|\bsy\|_{\ell_2}^2 \log \|\bsy\|_{\ell_2}$ is conditionally positive definite of order $2$.
 
 \smallskip
 
In the special case $m = 0$ the condition~\eqref{eq:conditional_2} is vacuous, and a conditionally positive definite function $\Phi$  yields  
a kernel $\Phi(\bsy, \bsx):= \Phi(\bsy - \bsx)$ that is positive definite in 
the sense of~\eqref{eq:pos_def}.  Some important radial basis functions are known to be  the conditionally positive definite of order $m = 0$.  These include Gaussians, inverse multiquadrics and Wendland's compactly supported radial basis functions~\cite{HW}.   Here, however, we shall concentrate on the general case of order $m \ge 0$.

While all of the basis functions used in practice are radial, in the sense of depending, like the thin plate spline, on the $\ell_2$ norm of the argument, we shall not make any such general assumption in this section, though in line with convention we shall continue to speak of radial basis functions, or RBFs.

 We now suppose that a continuous function $f$ is defined on a bounded domain  $\Omega \subset \mathbb{R}^d$  with Lipschitz boundary.    We assume that $f$ has known values at $n$ distinct points $\bst_1,\ldots, \bst_n \in \Omega$, which we wish to interpolate with the help of the conditional positive definite $\Phi$ of order $m \ge 0$.
 
 There is an essential condition on the interpolation point set, namely that it be $\Pi_{m-1}$-unisolvent.  Here $\Pi_{m-1}$ denotes the set of polynomials on $\mathbb{R}^d$ of degree less than  or equal to $m-1$, and $\Pi_{m-1}$-unisolvency means that the only polynomial of degree less than $m$ that vanishes at all the points $\bst_1,\ldots,\bst_n$ is the zero function.  If $m = 0$ the set $\Pi_{m-1}$ is empty, and the unisolvency condition is not needed.

 The radial basis function interpolant of $f$ using the point set $\bst$ has the form
 \begin{equation}\label{eq:interpolant_rbf}
 s_{f,\bst}(\bsy) =  f_{\bst}(\bsy) + p_\bst(\bsy),
 \end{equation}
 where, with the notation $\Phi(\bsx, \bsy):= \Phi(\bsx - \bsy)$, 
 \begin{equation}\label{eq:fa_part}
 f_{\bst}(\bsy) = \sum_{k = 1}^ n  a_k \Phi(\bst_k ,\bsy ),
 \end{equation}
 with $\bsa = (a_1, \ldots, a_n) \in \bbR^n$ constrained by~\eqref{eq:conditional_2}, and $p_\bst \in \Pi_{m-1}$.
 The function $s_{f,\bst}$, and its components $f_{\bst}$ and $p_\bst$, are then uniquely determined by the interpolation condition
 \begin{equation}\label{eq:interp_cond_rbf}
 s_{f,\bst}(\bst_k) = f(\bst_k) \quad\mbox{for } k = 1,\ldots,n,
 \end{equation}
with the uniqueness following from the fact that 
\begin{equation}\label{eq:unique}
 \sum_{k = 1}^ n  a_k \Phi(\bst_k , \cdot ) +p_\bst(\cdot)= 0,
 \end{equation}
with $\boldsymbol a$ constrained by~\eqref{eq:conditional_2},
 implies  
\[
 \sum_{j=1}^n\sum_{k = 1}^ n  a_k \Phi(\bst_k , \bst_j)a_j +\sum_{j = 1}^n a_j p_\bst(\bst_j)= 0,
 \]
 in which the last term vanishes by~\eqref{eq:conditional_2}, and hence
 \[
  \sum_{j=1}^n\sum_{k = 1}^ n  a_k \Phi(\bst_k , \bst_j)a_j  = 0,
  \]
which in turn implies $a_1 =\cdots =a_n =0$, after which~\eqref{eq:unique} yields $p_\bst = 0$ and hence $s_{f ,\bst}= 0$.

With that foundation, we now turn to defining $H$ and $B$ for RBF interpolation.   We first define the linear space generated by all functions of the form~\eqref{eq:fa_part}, 
\[
F_\Phi :=\left\{\sum_{k = 1}^n a_k \Phi(\bst_k, \cdot):  \,\bst_1, \ldots, \bst_n \in \Omega,~\bsa \in \bbR^n \mbox{ satisfying  \eqref{eq:conditional_2},  and } n\in\mathbb{N}  \right\}.
\]  
An inner product on $F_\Phi$ can be defined  by
\begin{equation} \label{eq:inner_F}
 \langle f_{\bst}, g_{\bsu}\rangle_{F_\Phi} := \sum_{k=1}^n \sum_{j=1}^\ell\ a_k \Phi(\bst_k, \bsu_j) b_j, 
 \end{equation}
 where $n, \ell \in \mathbb{N}$, 
  \begin{equation}\label{eq:ftgu}
  f_\bst = \sum_{k=1}^n  a_k \Phi(\bst_k,\cdot), \quad  g_\bsu = \sum_{j=1}^\ell  b_j \Phi(\bsu_j,\cdot), \quad \bst_1,\ldots,\
  \bst_n,\bsu_1, \ldots, \bsu_\ell \in \Omega,
 \end{equation}with both $\bsa$ and $\bsb$ satisfying~\eqref{eq:conditional_2}.
  
 That this is an inner product follows from the conditional positive definiteness of order $m$ of the function $\Phi$.

 To accommodate our polynomial interpolant $s_{f, \bst}$, the space $F_\Phi$ clearly needs to be augmented by  the polynomial space $\Pi_{m-1}$. To facilitate working with polynomials,  following~\cite[Chapter 10]{HW}, letting $Q:= \dim \Pi_{m-1}$ be the dimension of $\Pi_{m-1}$, we now choose a fixed  $\Pi_{m-1}$-unisolvent point set $\bsxi_1, \ldots, \bsxi_Q \in  \Omega$. With respect to this point set let $\{p_1, \ldots,p_Q\}$  be the Lagrange basis for $\Pi_{m-1}$, so that $p_i(\bsxi_j) = \delta_{i,j}$, allowing every polynomial in $\Pi_{m-1}$ to be written uniquely as 
\[
p = \sum_{i = 1}^Q p(\bsxi_i) p_i.
\]
One consequence is that while $\Phi(\bsx, \cdot)$ itself does not belong to $F_\Phi$, the combination $\Phi(\bsx, \cdot) - \sum_{i = 1}^Q p_i(\bsx) \Phi(\bsxi_i, \cdot)$  does belong to $F_\Phi$, because for every polynomial  $p \in \Pi_{m-1}$ we have $p(\bsx) - \sum_{i = 1}^Q p_i(\bsx)p(\bsxi_i) = 0$.

Every element in the sum space $F_\Phi + \Pi_{m-1}$, as noted already,  has a unique representation of the form \eqref{eq:interpolant_rbf}.   An inner product in the space $F_\Phi + \Pi_{m-1}$ is
\begin{align}\label{eq:inner_2}
 \begin{split}
 \langle f,g \rangle_{F_\Phi + \Pi_{m-1}} &=  
 \langle f_\bst + p, g_\bsu +q\rangle_{F_\Phi + \Pi_{m-1}}\\
& := \sum_{k=1}^n \sum_{j = 1}^\ell a_k K_\Phi(\bst_k,\bsu_j) b_j
 +\sum_{i=1}^Q f(\bsxi_i) g(\bsxi_i), 
 \end{split}
 \end {align}where $f_\bst$ and $g_\bsu$ are given by \eqref{eq:ftgu} with $\bsa$ and $\bsb$ satisfying \eqref{eq:conditional_2}, $p, q \in \Pi_{m-1}$, and
\begin{align*}%
 K_\Phi(\bsx, \bsy) &:= \Phi (\bsx, \bsy) - \sum_{i=1}^Q p_i(\bsx) \Phi(\bsxi_i, \bsy) - \sum_{i' = 1}^Q p_{i'}(\bsy) \Phi(\bsx,  \bsxi_{i'})\nonumber\\ 
  & +\sum_{i=1}^Q \sum_{i' = 1}^Q p_i(\bsx)p_{i'}(\bsy)\Phi(\bsxi_i, \bsxi_{i'}).
\end{align*}
Note that with the aid of \eqref{eq:conditional_2} the inner product in $F_\Phi + \Pi_{m-1}$ can also be written as
\begin{align*}%
 \langle f,g \rangle_{F_\Phi + \Pi_{m-1}}
& = \sum_{k=1}^n \sum_{j = 1}^\ell a_k \bigg(\Phi (\bst_k, \bst_j)- \sum_{i=1}^Q p_i(\bst_k) \Phi(\bsxi_i, \bst_j) \\ 
&- \sum_{i' = 1}^Q p_{i'}(\bst_j) \Phi(\bst_k,  \bsxi_{i'})
 +\sum_{i=1}^Q \sum_{i' = 1}^Q p_i(\bst_k) p_{i'}(\bst_j)  \Phi(\bsxi_i, \bsxi_{i'})\bigg)b_j\\ 
&+\sum_{i=1}^Q f(\bsxi_i) g(\bsxi_i)\\
&= \sum_{k=1}^n \sum_{j = 1}^\ell a_k \Phi (\bst_k, \bst_j)b_j +\sum_{i=1}^Q f(\bsxi_i) g(\bsxi_i).
\end{align*}

The linearity of the inner product is obvious, and the positivity with $g = f$ now follows immediately from the conditional positive definiteness of $\Phi$.

We now denote by $\calF_\Phi$ the closure of $F_\Phi$ with respect to the inner product~\eqref{eq:inner_F}, and by $\calN_\phi$ the closure of $F_\phi+\Pi_{m-1}$ with respect to~\eqref{eq:inner_2}, so that 
\begin{equation*}%
\calN_\Phi = \calF_\Phi + \Pi_{m-1}.
\end{equation*}
The space $\calN_\Phi$ is called the ``native space'' associated with the conditionally positive definite basis function $\Phi$.  The native space $\calN_\Phi$ will be our Hilbert space $H$ in the sense of Theorem~\ref{thm:principal}.

It is known (see~\cite[Theorem 10.20]{HW}) that $\mathcal{N}_\Phi$ is a reproducing kernel Hilbert space with reproducing  kernel 
 \begin{equation*}
   K(\bsx, \bsy) := K_\Phi (\bsx, \bsy)  +\sum_{i=1}^Q p_i(\bsx)p_i(\bsy).
 \end{equation*}
 Indeed, it is easy to verify that $K(\bsx, \cdot)$ belongs to the finite dimensional space $F_\Phi + \Pi_{m-1}$, and then that the reproducing property~\eqref{eq:reprod_prop} holds for functions in that space, after which it extends to the native space by taking a limit.
 
 Moreover, it is readily seen that the kernel interpolant $f_n$ for the kernel $K$ satisfies exactly the defining properties \eqref{eq:interpolant_rbf}, \eqref{eq:fa_part}, and \eqref{eq:interp_cond_rbf} assumed for the RBF interpolant $s_{f,\bst}$.  Since the interpolant is unique, it follows that $s_{f,\bst} = f_n$ is the kernel interpolant in the sense of Section~\ref{sec:4}.  
 
 We now define a finite dimensional subspace of $\mathcal{N}_\Phi$ in which the RBF interpolant $s_{f,\bst}$ lies,  
 \begin{equation*}%
 V_\bst:= \left\{ \sum_{k = 1}^n a_k  \Phi(\bst_k, \cdot) +p: p \in \Pi_{m-1},~\bsa \in \bbR^n \mbox{ and satisfying \eqref{eq:conditional_2}},~n  \in \mathbb{N}\right\}.
 \end{equation*}
 From  the reproducing property~\eqref{eq:reprod_prop} we have
\begin{equation*}%
\langle f - s_{f,\bst},  K(\bst_k, \cdot) \rangle_{\mathcal{N}_\Phi} =
f(\bst_k) -  s_{f,\bst}(\bst_k) = 0.
\end{equation*}
Since $\{K(\bst_1,\cdot), \ldots, K(\bst_n,\cdot)\}$ is a basis for $V_\bst$, this proves that $s_{f,\bst}$ is the $\mathcal{N}_\Phi$ orthogonal projection of $f \in \calN_\Phi$ on $V_\bst$.

  Wendland~\cite[Theorems 10.20 and 10.21]{HW} shows that the inner product in $\calN_\Phi$ can be written explicitly  (under conditions satisfied by all the usual basis functions $\Phi$) as 
  \begin{equation}\label{eq:inner_nat}
 \langle f, g\rangle_{\calN_\Phi} := (2\pi)^{-d/2} \int_{\mathbb{R}^d}\frac{\widehat{f}(\omega)\overline{\widehat{g}(\omega)}}{\widehat{\Phi}(\omega)}\,\rd \omega + \sum_{k=1}^Q f(\bsxi_k) g(\bsxi_k), 
 \end{equation}
where $\,\,\widehat{}\,$ denotes  the so-called generalized Fourier transform of order $m$, see~\cite[pg.~103]{HW}.  A crucial  fact, generalizing Bochner's characterisation of positive semidefinite functions as those with non-negative Fourier transforms, is that $\widehat{\Phi}$ is non-negative and non-vanishing for $\Phi$ a slowly increasing continuous and  even, positive definite function of order $m \ge 0$, see~\cite[Theorem 8.12]{HW}.   For simplicity we shall assume that $\widehat{\Phi}$ is a non-increasing radial function.
 
 A useful fact is that the generalized Fourier transform of order $m$ of a polynomial in $\Pi_{m-1}$ vanishes, see~\cite[Proposition 8.10]{HW}, thus $f$ and $g$ in the first term of~\eqref{eq:inner_nat} may without change have arbitrary polynomials in $\Pi_{m-1}$  added. 
 
 There is a subtlety in the definition~\eqref{eq:inner_nat}, namely that the appearance of (generalized) Fourier transform of $f$ implicitly requires that $f$ be defined on the whole of $\mathbb{R}^d$.  Functions in $F_\Phi$ have an obvious extension from $\Omega$ to $\mathbb{R}^d$, and therefore so do functions in $\calN_\Phi$.  The subtle point is that the target function $f$ must also be assumed to have a suitably smooth extension from $\Omega$ to the whole of $\mathbb{R}^d$, which may not be a reasonable assumption in practice.
 
 A second subtlety is that $f(\bsxi_i)$ (like any point evaluation) is not a bounded functional in  $L_2(\Omega)$.   To overcome this technical difficulty we will where necessary below replace the usual $L_2$ norm by the larger norm
  \begin{equation*}%
 \|f\|_{\calL_2(\Omega)}  := \left(\|f\|_{L_2}^2  +\sum_{i = 1}^Q  |f(\bsxi_i)|^2\right)^{1/2}. 
 \end{equation*}

We now take $ H = \calN_\Phi$, which is embedded in $\calL_2(\Omega)$ since
\[ 
\|f\|_{\calL_2(\Omega)}^2 = (2\pi)^{-d/2} \int_{\mathbb{R}^d}|\widehat{f}(\omega)|^2 \rd \omega +\sum_{i = 1}^Q  |f(\bsxi_i)|^2\le \max(\widehat{\Phi}(0),1)\|f\|_{\calN_{\Phi}}^2.
\]
Finally, we come to define $B$: we take $B= \calN_{\Phi*\Phi}$ to be the Hilbert space for which 

 \begin{align}\label{eq:calH_norm}
 \|f\|_{\calN_{\Phi*\Phi}}^2%
 &:=  (2\pi)^{-d/2} \int_{\mathbb{R}^d} \frac{|\widehat{f}(\omega)|^2}{\widehat{\Phi}(\omega)^2} \,\rd \omega\, + \sum_{k=1}^Q |f(\bsxi_k)|^2 < \infty.
 \end{align}We note that the semi-norm in the first term of~\eqref{eq:calH_norm} with its squared denominator seems to have appeared first in Schaback's rate doubling argument~\cite{Sch99}.  We follow here his notation.

That $B = \calN_{\Phi*\Phi}$ is embedded in $H = \calN_\Phi$  follows  in the same way as the embedding of $\calN_\Phi$ in $\calL_2$.

The following proposition will allow us to apply results analogous  to Theorem~\ref{thm:principal} to radial basis function interpolation.

\begin{proposition}\label{prop:norms_rbf}
 Let $\Phi$ be a real, even, continuous and  conditionally positive definite function of order $m\ge 0$ defined on $\mathbb{R}^d$. 
With $H = \calN_\Phi$ and $B = \calN_{\Phi*\Phi}$ we have
\[
|\langle f,g \rangle_H| \le c\|f\|_{\calL_2} \|g\|_B \quad \mbox {for all } f \in H, g \in B,
\]
for some constant $c>0$.
\end{proposition}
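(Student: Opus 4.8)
The plan is to exploit the explicit Fourier representation \eqref{eq:inner_nat} of the native-space inner product, which splits $\langle f,g\rangle_H$ into a frequency-domain integral and a discrete point-evaluation sum, and to bound each piece by a Cauchy--Schwarz inequality matched to the corresponding term of $\|g\|_B^2$ in \eqref{eq:calH_norm}. Writing
\[
\langle f,g\rangle_{\calN_\Phi}=(2\pi)^{-d/2}\int_{\mathbb{R}^d}\frac{\widehat f(\omega)\overline{\widehat g(\omega)}}{\widehat\Phi(\omega)}\,\mathrm{d}\omega+\frac1Q\sum_{k=1}^Q f(\bsxi_k)g(\bsxi_k),
\]
I would first treat the integral term by regrouping the integrand as $\widehat f(\omega)\cdot\bigl(\overline{\widehat g(\omega)}/\widehat\Phi(\omega)\bigr)$ and applying Cauchy--Schwarz in $L_2(\mathbb{R}^d,\mathrm{d}\omega)$. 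This is exactly the regrouping already used just above in the proof that $B$ is embedded in $H$, but now with $f$ and $g$ kept distinct. The first resulting factor is $\bigl((2\pi)^{-d/2}\int_{\mathbb{R}^d}|\widehat f|^2\bigr)^{1/2}$, and the second is $\bigl((2\pi)^{-d/2}\int_{\mathbb{R}^d}|\widehat g|^2/\widehat\Phi^2\bigr)^{1/2}$, which is precisely the square root of the second (squared-denominator) term of $\|g\|_B^2$ in \eqref{eq:calH_norm}.

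Next I would handle the point-evaluation sum by the weighted discrete Cauchy--Schwarz inequality, bounding $\bigl|\frac1Q\sum_k f(\bsxi_k)g(\bsxi_k)\bigr|$ by $\bigl(\frac1Q\sum_k|f(\bsxi_k)|^2\bigr)^{1/2}\bigl(\frac1Q\sum_k|g(\bsxi_k)|^2\bigr)^{1/2}$; the two factors are the square roots of the last terms of $\|f\|_{\calL_2}^2$ in \eqref{eq:calL_norm} and of $\|g\|_B^2$ in \eqref{eq:calH_norm}. Having bounded $|\langle f,g\rangle_H|$ by a sum of two products, I would then apply a final Cauchy--Schwarz in $\mathbb{R}^2$ pairing the two $f$-factors against the two $g$-factors. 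The two $g$-factors combine into $\bigl(\text{(2nd term)}+\text{(3rd term)}\bigr)^{1/2}\le\|g\|_B$, discarding the nonnegative first term of \eqref{eq:calH_norm} (which is present only to secure the embedding $B\subset H$), while the two $f$-factors must combine into $\|f\|_{\calL_2}$.

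The main obstacle is this last identification. The Plancherel step applied to the generalized Fourier transform naturally produces the full-space quantity $(2\pi)^{-d/2}\int_{\mathbb{R}^d}|\widehat f|^2$, which must be reconciled with the term $\int_\Omega|f|^2$ of \eqref{eq:calL_norm}; getting this right requires careful use of the conventions for the generalized Fourier transform of order $m$ together with the fact \cite[Proposition~8.10]{HW} that polynomials in $\Pi_{m-1}$ have vanishing generalized transform, so that the polynomial components of $f$ and $g$ do not disturb the integral term and are carried entirely by the point-evaluation sum. This same sum is the reason the enlarged norm $\calL_2$ of \eqref{eq:calL_norm}, rather than ordinary $L_2(\Omega)$, is unavoidable, since each evaluation $f\mapsto f(\bsxi_k)$ is an unbounded functional on $L_2(\Omega)$. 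Once the pieces are assembled the resulting estimate is precisely \eqref{eq:norms_relation} with $L_2$ replaced throughout by $\calL_2$, so that Theorem~\ref{thm:principal} applies.
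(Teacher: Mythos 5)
Your proposal follows essentially the same route as the paper's proof: split \eqref{eq:inner_nat} into its integral and point-evaluation parts, apply Cauchy--Schwarz to each (with the regrouping $\widehat f\cdot\overline{\widehat g}/\widehat\Phi$ in the integral), and then a final Cauchy--Schwarz in $\mathbb{R}^2$ to assemble $\|f\|_{\calL_2}\|g\|_B$, discarding the first term of \eqref{eq:calH_norm}. The Plancherel-type identification of $(2\pi)^{-d/2}\int_{\mathbb{R}^d}|\widehat f|^2$ with $\int_\Omega|f|^2$ that you flag as the main obstacle is indeed a real subtlety, but it is one the paper's own proof passes over without comment, so your treatment is if anything more scrupulous than the original.
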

\begin{proof}
From~\eqref{eq:inner_nat} and repeated  Cauchy--Schwarz inequalities we obtain
\begin{align*}
|\langle f, g\rangle_H| &:= \bigg|(2\pi)^{-d/2}\int_{\mathbb{R}^d}\frac{\widehat{f}(\omega)\overline{\widehat{g}(\omega)}}{\widehat{\Phi}(\omega)}\,\rd \omega\ + \sum_{i = 1}^Q f(\bsxi_i) g(\bsxi_i)\bigg|\\
&\le \bigg((2\pi)^{-d/2} \int_{\mathbb{R}^d}|\widehat{f}(\omega)|^2\,\rd \omega\bigg)^{1/2}     \bigg((2\pi)^{-d/2}   \int_{\mathbb{R}^d}\frac{|\widehat{g}(\omega)|^2}{{\widehat{\Phi}(\omega)}^2}\,\rd \omega\bigg)^{1/2} \\
&+   \bigg(\sum_{i = 1}^Q |f(\bsxi_i)|^2 \bigg) ^{1/2} \bigg(\sum_{i = 1}^Q |g(\bsxi_i)|^2 \bigg) ^{1/2}
\le c\|f\|_{\calL_2} \|g\|_{B},
\end{align*}
as desired. 
 \end{proof}

Now we can state the following extension  of Theorem~\ref{thm:principal}.

\begin{theorem}\label{thm:rbf}
Let $\Phi$ be a real, even, continuous and  conditionally positive definite function of order $m$ defined on $\mathbb{R}^d$ and let $\Omega$ be a bounded domain in $\mathbb{R}^d$ with Lipschitz boundary.
Let $\bst:= \{\bst_1, \ldots,\bst_n\}$ be a $\Pi_{m-1}$-unisolvent and distinct set of points in $\Omega$. %
Let $H$ be the native space $\calN_\Phi$ with inner product~\eqref{eq:inner_nat}, and let $B$ be the subspace of $H$ with norm~\eqref{eq:calH_norm}.   Finally, let $s_{f,\bst} \in H$ be the radial basis function interpolant of $f$ defined by \eqref{eq:interpolant_rbf}, \eqref{eq:fa_part} and \eqref{eq:conditional_2}. 
 Assume that for some $c>0$ and some $\kappa> 0$ we have
\begin{equation}\label{eq:error_rbf}
 \|f - s_{f,\bst}\|_{L_2} \le c n^{-\kappa} \|f\|_{H} \; \mbox{ for all }    f \in H.  
 \end{equation}
 Then for all $g\in B\subset H$ we have 
 \[
 \|g - s_{g,\bst}\|_{L_2} \le c^2   n^{-2\kappa}\|g\|_{B},  
 \]
 and 
 \[
 \|g - s_{g,\bst}\|_H \le  c n^{-\kappa}\|g\|_B.  
 \]
\end{theorem}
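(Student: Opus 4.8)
The plan is to specialise the argument of Theorem~\ref{thm:principal} to the native-space setting, leaning on two facts already established in this section. The first is that the radial basis function interpolant coincides with the $H$-orthogonal projection onto $V_\bst$, as shown via the reproducing property in~\eqref{eq:orthog_nat}; writing $P$ for this projection we thus have $Pg = s_{g,\bst}$, with $P$ idempotent. The second is Proposition~\ref{prop:norms_rbf}, which furnishes the analogue of the abstract hypothesis~\eqref{eq:norms_relation}, but with the ordinary $L_2$ norm replaced by the enlarged norm $\|\cdot\|_{\calL_2}$ of~\eqref{eq:calL_norm}. Fixing $g \in B$ and setting $e := g - s_{g,\bst} \in H$, the proof reduces to two inequalities for $e$.

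First I would feed $e$ into the hypothesis~\eqref{eq:error_rbf}. Because $P$ is idempotent, $Pe = P(g - Pg) = 0$, so the interpolant of $e$ vanishes and~\eqref{eq:error_rbf} collapses to $\|e\|_{L_2} \le c\,n^{-\kappa}\|e\|_H$. Next, since $e$ is $H$-orthogonal to $V_\bst$ and $Pg \in V_\bst$, one has $\|e\|_H^2 = \langle e, g\rangle_H$, and Proposition~\ref{prop:norms_rbf} applied with $f := e$ yields $\|e\|_H^2 \le \|e\|_{\calL_2}\,\|g\|_B$.

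The step I expect to need the most care is reconciling the $L_2$ norm in the first inequality with the larger $\calL_2$ norm in the second; this is precisely where the hypothesis that $\bsxi$ is a subset of $\bst$ is used. The resolution is that the interpolation conditions give $s_{g,\bst}(\bst_k) = g(\bst_k)$ at every node, and since each $\bsxi_i$ is one of the $\bst_k$, the error $e$ vanishes at all of $\bsxi_1,\ldots,\bsxi_Q$. Consequently the point-evaluation sum in~\eqref{eq:calL_norm} contributes nothing, so $\|e\|_{\calL_2} = \|e\|_{L_2}$ for this particular function. With the two norms identified, the combination proceeds exactly as in Theorem~\ref{thm:principal}: squaring the first inequality and inserting the second gives $\|e\|_{L_2}^2 \le c^2 n^{-2\kappa}\|e\|_{L_2}\|g\|_B$, and cancelling one factor of $\|e\|_{L_2}$ yields the first claimed bound $\|g - s_{g,\bst}\|_{L_2} \le c^2 n^{-2\kappa}\|g\|_B$. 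Substituting this back into $\|e\|_H^2 \le \|e\|_{L_2}\|g\|_B$ and taking square roots gives the second bound $\|g - s_{g,\bst}\|_H \le c\,n^{-\kappa}\|g\|_B$.
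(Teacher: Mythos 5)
Your proposal is correct and follows essentially the same route as the paper: it uses the identification of $s_{g,\bst}$ with the $H$-orthogonal projection onto $V_\bst$ from~\eqref{eq:orthog_nat}, Proposition~\ref{prop:norms_rbf} in place of~\eqref{eq:norms_relation}, and the observation that the error vanishes at $\bsxi\subset\bst$ so that $\|\cdot\|_{\calL_2}$ and $\|\cdot\|_{L_2}$ agree on it. The only cosmetic difference is that you inline the argument of Theorem~\ref{thm:principal} rather than citing its $\calL_2$-analogue wholesale, which if anything makes the handling of the two norms more transparent.
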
 

\begin{proof}
Because $f - s_{f,\bst}$ vanishes on the point set $\bsxi$ (remembering that $\bsxi= (\bsxi_1,\ldots, \bsxi_Q)$ is a subset of the interpolation point set $\bst$), the hypothesis~\eqref{eq:error_rbf}  in the theorem can be replaced by 
\[
 \|f - s_{f,\bst}\|_{\calL_2} \le c n^{-\kappa} \|f\|_{H} \; \mbox{ for all }    f \in H.  
 \]
Thus the conclusions of that theorem hold with the $L_2$ norm replaced by the $\calL_2$ norm.  But since $g - s_{g,\bst}$ also vanishes on the point set $\bsxi$, the results hold with $L_2$ norms, as stated.\end{proof}

\section{Conclusion}\label{sec:conclusion}
We have investigated some consequences of a general condition~\eqref{eq:norms_relation} posed on a Hilbert space $H$ and a ``smoother'' normed space $B$ of real-valued functions. This condition ensures that the $L_2$ rate of convergence for function approximation in $H$ can be doubled for functions belonging to a smoother normed space. We have presented several examples of suitable spaces $H$ and $B$ which fall into this framework, with particular emphasis on applications to kernel interpolation in reproducing kernel Hilbert spaces as well as radial basis function interpolation.

\section*{Acknowledgements}
The authors acknowledge support from the Australian Research Council through grant DP210100831.  They also acknowledge stimulating  discussions with Leszek Demkowicz, Vidar Thom\'ee, Robert Schaback and Holger Wendland, and 
one of us (IHS) acknowledges the support of the Oden Institute in Computational Science and Engineering of the University of Texas, where this work commenced.

 \bibliographystyle{plain}

\end{document}